\documentclass[preprint,12pt]{elsarticle}
\usepackage{amssymb}
\usepackage{amsmath}
\usepackage{amssymb}
\usepackage{amsmath}
\usepackage{enumitem}
\usepackage[utf8]{inputenc}
\usepackage[numbers]{natbib}
\usepackage{breqn}
\usepackage[title]{appendix}
\usepackage{quotmark}
\usepackage{amsthm} 
\usepackage{varioref}
\usepackage{graphicx}
\newtheorem{theorem}{Theorem}[section]
\newtheorem{lemma}[theorem]{Lemma}
\newtheorem*{remark*}{Remark}
\newtheorem{corollary}[theorem]{Corollary}

\theoremstyle{definition} 
\newtheorem{definition}{Definition}[section]
\newtheorem{example}{Example}[section]
\newtheorem{remark}{Remark}[section]

\usepackage{hyperref}
\usepackage{cleveref}
\journal{Topology and its applications}

\begin{document}

\begin{frontmatter}
\title{Open cover specification property and chaos}

\author[aff1]{Shweta Wadhwani}
\ead{wadhwanishweta175@gmail.com}
\author[aff1]{Devender Kumar}
\ead{kalgandevender@gmail.com}
\author[aff2]{A R Prasannan }
\ead{arprasannan@mac.du.ac.in}

  \affiliation[aff1]{organization={Department of Mathematics, University of Delhi},
   postcode={110007},
   state={Delhi},
         country={India} 
}
\affiliation[aff2]{organization={Department of Mathematics, Maharaja Agrasen College, University of Delhi},
           postcode={110007},
           state={Delhi},
         country={India} }

\begin{abstract}
This paper investigates the open cover specification property and its role in the study of topological dynamical systems. We show that this property is a topological invariant and a natural extension of the classical topological specification property from uniform spaces to general topological spaces, and that it is preserved under infinite products with compositions. We further prove that on regular spaces, the strong open cover specification property implies Devaney chaos, and that on locally compact, first countable Hausdorff spaces, it guarantees both distributional chaos and Li–Yorke chaos. These results demonstrate the significance of the open cover specification property in linking topological structure with dynamical complexity.
\end{abstract} 
\begin{keyword}
Open cover specification property \sep Shadowing property \sep Distributional chaos \sep Devaney chaos

\MSC[] {37B02, 37B65}
\end{keyword}

\end{frontmatter}
\section{Introduction}

Let $f$ be a continuous self-map on $X$, where $X$ is an arbitrary, not necessarily compact, topological space. We denote the set of integers by $\mathbb{Z}$ and the set of natural numbers by $\mathbb{N}$. Specification is a fundamental orbit-approximation property in topological dynamics, which allows finitely many orbit segments to be approximately traced by the orbit of a single point. More precisely, for any $\epsilon>0$, there exists $N\in\mathbb{N}$ such that for any $k\in\mathbb{N}$, any points $x_1,x_2,\ldots,x_k\in X$, and any integers $a_1\leq b_1<a_2\leq b_2<\cdots<a_k\leq b_k,
\qquad a_{i+1}-b_i>N,$
there exists $x\in X$ such that $d\bigl(f^j(x),f^j(x_i)\bigr)<\epsilon$
for $a_i\leq j\leq b_i$ and $1\leq i\leq k$. Thus, the specification property provides a mechanism for **joining** finitely many prescribed orbit segments, with sufficiently long gaps between consecutive segments. This property plays a central role in understanding the long-term behavior of dynamical systems, particularly in the study of periodic points, invariant measures, entropy, and chaotic phenomena. Because specification provides a powerful mechanism for combining orbit segments, it has become an important tool for linking structural properties of dynamical systems with various notions of chaos.

The specification property was introduced in the context of Axiom A diffeomorphisms by Bowen~\cite{bowen1971}, where it was used to study periodic points and invariant measures. Since then, several related forms, such as weak specification, periodic specification, and almost specification, have been developed and studied~\cite{dateyama1990,kulczycki2014,pfister2005}. Pfister and Sullivan~\cite{pfister2007} further generalized specification through approximate and almost product properties, showing that these notions are useful in establishing large deviation principles and entropy formulas. Beyond compact metric spaces, Shah et al.~\cite{shah2016} introduced topological specification for non-compact uniform topological spaces, and more recently, Kumar and Das~\cite{kumar2024} proposed the open cover specification property, extending specification to arbitrary topological spaces.

The study of chaos has also revealed strong connections with specification-type properties. Sensitivity to initial conditions is a fundamental property of dynamical systems in which arbitrarily small differences in initial points can lead to significant differences in their future trajectories. More precisely, a dynamical system $(X,f)$ is said to exhibit sensitive dependence on initial conditions if there exists $\epsilon>0$ such that, for every $x\in X$ and every neighborhood $G$ of $x$, there exist $y\in G$ and $n\in\mathbb{N}$ satisfying $d\bigl(f^n(y),f^n(x)\bigr)>\epsilon.$
Devaney~\cite{devaney1989} characterized chaos through topological transitivity and the density of periodic points. Under the usual assumptions on the underlying space, these properties imply sensitivity to initial conditions, establishing an important connection between topological and metric aspects of chaotic dynamics.

Chaos was first formalized by Li and Yorke~\cite{li1975}, who showed that the existence of a period-three point implies complex dynamical behavior. Schweizer and Smital~\cite{schweizer1994} introduced distributional chaos, providing a refined description of irregular behavior through distributionally scrambled pairs and generalizing Li--Yorke chaos. The relationship between specification and distributional chaos was established by Sklar and Smital~\cite{sklar2000}, who proved that, on compact metric spaces, weak specification implies distributional chaos. These ideas were further refined by Oprocha~\cite{oprocha2010}, who classified distributional chaos into types DC1, DC2, and DC3, and by Yadav and Shah~\cite{yadav2022}, who extended the implications of weak specification to distributional chaos in non-compact spaces.

In this paper, we study the open cover specification property and its consequences for both the structural and chaotic behavior of dynamical systems. We first prove that if the countable product of dynamical systems satisfies the open cover specification property then each factor system also satisfies it and open cover specification property on $(X,f)$ can be extended to open cover specification property on $(X,f^p)$ for any $p \in \mathbb{Z}$. We then show that open cover specification property is a topological conjugacy invariant property. For uniform spaces, we establish the equivalence between the open cover specification property and the topological specification property. Moreover, for compact dynamical systems, we show that chain mixing together with the finite shadowing property implies the open cover specification property. Finally, we investigate chaotic dynamics and provide conditions under which the open cover specification property guarantees Devaney chaos distributional chaos and Li-Yorke chaos.

\section{Preliminaries}
 Unless mentioned otherwise,  $(X,f)$ is a topological dynamical system on . For integers $a,b$ define $[a,b]=\{x\in \mathbb{Z}, a \leqslant x \leqslant b\}$. For two dynamical systems $(X_1,f_1)$ and $(X_2,f_2)$, $(X_1 \times X_2,f_1 \times f_2)$ is the cross product of dynamical systems $X_1$ and $X_2$, with $(f_1 \times f_2)(x_1,x_2)=(f_1(x_1),f_2(x_2))$. For dynamical systems $(X_1,f_1)$ and$(X_2,f_2)$, a property P is said to be a topological conjugacy invariant property if there exists homeomorphism $h: X_{1} \longrightarrow X_{2}$  such that $h \circ f_{1}=f_{2} \circ h$. 

 For a topological dynamical system $(X,f)$, $x \in X$ is periodic point if for some natural number $p$, $f^p(x)=x$. 
 For an open cover $\mathcal{U}$, $(x_i)_{i \in \mathbb{N}}$ is called $\mathcal{U}$-pseudo orbit if $( f(x_{i}),x_{i+1}) \in U \times U$ for some $U \in \mathcal{U}$ for all $i \in \mathbb{N}$.
 For a uniform space $X$ and entourage $\mathbf{V} \in \mathbb{U}$ where $\mathbb{U}$ is a uniformity on X, $(x_i)_{i \in \mathbb{N}}$ is called $\mathbf{V}$-pseudo orbit if for all $i \in \mathbb{N}$, $( f(x_{i}),x_{i+1}) \in \mathbf{V}$. 
 For any finite or infinite sequence $(x_i)_{i \in K}$, where $K \subseteq \mathbb{N}$ and any open cover $\mathcal{U}$ of $X$, $(x_i)$ is said to be $\mathcal{U}$-traced by $y \in X$ if $\left(f^{i}(y), x_{i}\right) \in U \times U, U \in  \mathcal{U}$ and $i \in K$. For any finite or infinite sequence $(x_i)_{i \in K}$ in a uniform space $(X,f)$ where $K \subseteq \mathbb{N}$ and any symmetric neighbourhood $\mathbf{V}$ of $X$, $(x_i)$ is said to be $\mathbf{V}$-traced by $y \in X$ if $\left(f^{i}(y), x_{i}\right) \in  \mathbf{V}$ for every $i \in K$.
  and is written as $\xi=\left\{f^{\left[a_{i}, b_{i}\right]}\left(x_{i}\right)\right\}_{i=1}^{n}$.

 We begin by recalling several basic notions from topological dynamics that will be used throughout the paper.

 \begin{definition}
     For $x \in X$ and an open cover $\mathcal{U}$, $\mathcal{U}$ is said to have an open refinement that is locally finite at the point $x$ if there exists a neighborhood of $x$ meeting only finitely many sets of the refinement.
 \end{definition}

    \begin{definition} 
    For $u,v \in X$, $(u,v)$ is called a pair of distal points if there exists an open cover $\mathcal{U}$ of $X$ such that if $f^{i} (u) \in U_{1} \in \mathcal{U},f^{i} (v) \in U_{2} \in \mathcal{U}$ then $U_{1} \cap U_{2}=\phi, \forall \  i \in \mathbb{N} \cup\{0\}.$
\end{definition}

\begin{definition}
    $(X,f)$ is said to be topological transitivity if for any open subsets $U,V$ of $X$, there exists $n \in \mathbb{N} \cup \{0\}$ such that $U \cap f^n(V) \neq \phi$ and $(X,f)$ is said to be topological mixing if $U \cap f^m(V) \neq \phi, \forall m \geqslant n$.
\end{definition}

\begin{definition}
\cite{Fedeli2018}For a topological space $X$, the dynamical system $(X,f)$ is said to have sensitive dependence on initial conditions(SDIC) if there exists an open cover $\mathcal{U}$ such that for any $x \in X$ and any open neighborhood $G$ of $x$, there exists $y \in G$ and $n \in \mathbb{N}$ such that $\forall U \in \mathcal{U}, (f^n(y),f^n(x)) \notin U\times U $.
Such open cover is called sensitivity cover. 
\end{definition}

\begin{definition}
    A dynamical system $(X,f)$ is said to have chain mixing if for any open cover $\mathcal{U}$ of $X$ and any $u, v \in X$, there exists $N \in \mathbb{N}$ such that for any $n \geqslant N $ there exists a finite $\mathcal{U}$-pseudo orbit $(x_i)_{i=0}^n$ where $u=x_{0}$, $v=x_{n}$.
\end{definition}

\noindent The following definitions formalize the concept of tracing a sequence by an actual orbit.

\begin{definition}
\cite{KumarDas2024}A dynamical system $(X,f)$ is said to have the 
shadowing property if for every open cover $\mathcal{U}$ of $X$, there exists an open cover $\mathcal{V}$ of $X$ such that any $\mathcal{V}$-pseudo-orbit can be $\mathcal{U}$-traced.

\noindent
The system $(X,f)$ is said to have the finite shadowing property if for every open cover $\mathcal{U}$ of $X$, there exists an open cover $\mathcal{V}$ of $X$ such that any finite $\mathcal{V}$-pseudo-orbit can be $\mathcal{U}$-traced.
\end{definition}

\begin{definition}
     \cite{shah2016}For a uniform space X, $(X,f)$ is said to have topological shadowing property if for any entourage $\mathbf{V}$, there exists an entourage $\mathbf{S}$ such that any $\mathbf{S}$-pseudo orbit can be $\mathbf{V}$-traced.
\end{definition}
\noindent We now turn to various forms of the specification property, formulated in different topological settings.

 \begin{definition}
    \cite{shah2016}Let $(X,f)$ be a uniform space with continuous function $f: X \rightarrow X$ and $\Delta_{X}=\{(x,x) | x \in X \}$. $(X,f)$ has topological specification property if for any symmetric neighborhood $\mathbf{V}$ of $\Delta_{X}$, $\exists M>0$ such that for any $M$-spaced specification $\xi=\left\{f^{\left[a_{i}, b_{i}\right]}\left(x_{i}\right)\right\}_{i=1}^{n},  \exists y \in X$ such that $\left(f^{k}(y), f^{k}\left(x_{i}\right)\right) \in \mathbf{V}, k \in [a_{i} ,b_{i}]; i \in [1,n]$.
\end{definition}
\begin{definition}
    \cite{kumar2024} $(X,f)$ has open cover specification property(OCSP) if for every open cover $\mathcal{U}$, there exists $N \in \mathbb{N}$ such that for any $N$-spaced specification $\xi$ there exists $y \in X$ that $\mathcal{U}$-traces $\xi$ i.e. $(f^k(y),f^k(x_i)) \in U \times U$, for some $U \in \mathcal{U}, k \in [a_i, b_i]; i \in [1,n]$.

    \noindent
    Further if  for some $l\geqslant N+b_n-a_1$, $f^l(y)=y $ then $(X,f)$ is said to have strong open cover specification property, that is, the tracing point can be chosen periodic.
\end{definition}
\begin{definition}
    A dynamical system ($X, f$) is said to have the Quasi weak open cover specification property (QOCSP)  if for any open cover $\mathcal{U}$, there exists a positive $N \in \mathbb{N}$ such that for $x_1, x_2 \in X$ and for any $n \geq N$, there is a point $y \in X$ such that $\left(y, x_1\right) \in U \times U, \text{ for some } U \in \mathcal{U}$ and $\left(f^{n}(y), f^{n}\left(x_2\right)\right)\in V \times V, \text{ for some } V \in \mathcal{U}$.
\end{definition}

\begin{definition}
    \cite{devaney1989}A dynamical system $(X,f)$ is said to be Devaney chaotic if 
    \begin{enumerate}
        \item $(X,f)$ is SDIC,
        \item $(X,f)$ is topologically transitive,
        \item set of periodic points of $X$ is dense in $X$.
    \end{enumerate}
\end{definition}
\noindent Finally, we introduce distributional chaos, a notion of chaotic behavior characterized by the asymptotic distribution of the times at which pairs of orbits remain close or become separated. Several forms of distributional chaos, namely DC1, DC2, and DC3, have been introduced in the literature. In this paper, we focus exclusively on DC1 and use the following definition.

\begin{definition}
\cite{schweizer1994}For $x,y \in X,$ an open cover $\mathcal{U}$ of $X$ and $n \in \mathbb{N}$, define $\zeta(x,y,n, \mathcal{U})=|\{k| k \in [1,n] \text{ and } (f^k(x), f^k(y)) \in U \times U, \text{ for some } U \in \mathcal{U}\}|$ where $|A|$ is the cardinality of the set A. 
By means of $\zeta$, we define $F_{xy}(\mathcal{U})= \liminf_{n \rightarrow \infty} \frac{1}{n} \zeta(x,y,n,\mathcal{U})$ and $F^{*}_{xy}(\mathcal{U})=\limsup_{n \rightarrow \infty}\frac{1}{n} \zeta(x,y,n,\mathcal{U})$.

\noindent 
$(x,y)$ are called pair of distributionally chaotic points if 
\begin{enumerate}
    \item $F_{xy}(\mathcal{U})=0$ for some open cover $\mathcal{U}$ of $X$ and
    \item $F^{*}_{xy}(\mathcal{U})=1$ for every open cover $\mathcal{U}$ of $X$.
\end{enumerate}
Set $\mathbf{S}$ is called a distributionally chaotic scrambled set if every pair of distict points $x,y$ in $\mathbf{S}$ are distributionally chaotic.

\noindent
If a dynamical system $(X,f) $ has an uncountable  distributionally chaotic scrambled set then $(X,f)$ is said to be distributionally chaotic system.
\end{definition}

\begin{remark*}
Many of the above notions coincide with their metric counterparts when $X$ is metrizable.
\end{remark*}

\section{Structural Properties}
We begin by establishing several fundamental structural properties of the open cover specification property. In particular, we show that the property is preserved under taking powers of the dynamical system and is closely related to the corresponding property on product systems. We also establish its invariance under topological conjugacy. These results demonstrate that the open cover specification property is determined by the underlying dynamical structure rather than by a particular representation of the system.

\begin{theorem}
    Let $X$ be an arbitrary, not necessarily compact, topological space and let $f\to X$ be a continuous map. Then the following statements are equivalent to $(X,f)$ having the open cover specification property:
    \begin{enumerate}[label=\roman*.]
    \item If $f$ is a homeomorphism then $(X,f^{p})$ has open cover specification property for every $p \in \mathbb{Z}$
        \item For $X=X_{1} \times X_{2}$ and $f= f_{1} \times f_{2}$ both $(X_{1},f_{1})$ and $(X_{2},f_{2})$ have open cover specification property.
        \item $(Y,g)$ has open cover specification property where $(Y,g)$ is a topological conjugate of $(X,f)$
        \item $(X,f)$ has the topological specification property, given that $X$ is a uniform space.
    \end{enumerate}
\end{theorem}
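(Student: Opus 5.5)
The plan is to treat each item as a separate implication between $(X,f)$ having the open cover specification property (OCSP) and the stated condition. For each, I will translate the data of one specification problem into another: open covers, spacing constant $N$, and the segments $f^{[a_i,b_i]}(x_i)$. All four arguments have the same shape. Given the target open cover, build an auxiliary open cover for the system where OCSP is known, take its $N$, and check that tracing in the auxiliary sense gives tracing in the target sense.

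\emph{Item (i).} First take $p\geq 1$ and a cover $\mathcal{U}$. Let $N$ be the OCSP constant of $f$ for $\mathcal{U}$, and put $N'=\lceil N/p\rceil+1$. An $N'$-spaced specification $\{(f^{p})^{[a_i,b_i]}(x_i)\}$ is contained in the $f$-specification $\{f^{[pa_i,pb_i]}(x_i)\}$, which is $pN'>N$ spaced. A point $y$ that $\mathcal{U}$-traces the latter for $f$ therefore traces the former for $f^p$, since only times that are multiples of $p$ are needed. The case $p=0$ is trivial. For $p<0$ it suffices to treat $f^{-1}$ and then apply the previous step. A specification for $f^{-1}$ along times $[a_i,b_i]$ is, after the substitution $j\mapsto b_n-j$, a specification for $f$ whose segments are listed in reverse order with the same gaps. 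Its points are $f^{-b_n}(x_i)$, with the index shift absorbed because $f$ is a homeomorphism. If $z$ traces this $f$-specification, then $y=f^{b_n}(z)$ traces the original one for $f^{-1}$. The converse, from (i) back to OCSP, is the case $p=1$.

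\emph{Items (ii) and (iii).} For (ii), given an open cover $\mathcal{U}_1$ of $X_1$, I apply OCSP of $X_1\times X_2$ to the cover $\{U\times X_2: U\in\mathcal{U}_1\}$. A specification $\{f_1^{[a_i,b_i]}(x_i)\}$ lifts to $\{f^{[a_i,b_i]}(x_i,w)\}$ for any fixed $w\in X_2$, and the first coordinate of a tracing point traces in $X_1$; $X_2$ is handled symmetrically. For the reverse direction I will only claim it for open covers of $X_1\times X_2$ that are refined by a product cover $\{U\times V\}$. For such covers I apply OCSP in each factor with the maximum of the two constants and pair the tracing points. For (iii), if $h\colon X\to Y$ is a conjugacy, then $h^{-1}(\mathcal{V})$ is an open cover of $X$ for any open cover $\mathcal{V}$ of $Y$. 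Using the same $N$, the image $h(y)$ of a tracing point for $\{f^{[a_i,b_i]}(h^{-1}x_i)\}$ traces the given specification in $Y$, because $h\circ f^k=g^k\circ h$. The converse uses $h^{-1}$.

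\emph{Item (iv).} For OCSP $\Rightarrow$ topological specification, take a symmetric entourage $\mathbf{V}$ and choose a symmetric $\mathbf{W}$ with $\mathbf{W}\circ\mathbf{W}\subseteq\mathbf{V}$. Then $\mathcal{U}=\{\mathrm{int}\,\mathbf{W}[z]:z\in X\}$ is an open cover. If $f^k(y)$ and $f^k(x_i)$ lie in a common $\mathbf{W}[z]$, then $(f^k(y),f^k(x_i))\in\mathbf{W}\circ\mathbf{W}\subseteq\mathbf{V}$. For the converse, given an open cover $\mathcal{U}$, I would want an entourage $\mathbf{V}$ such that every $\mathbf{V}[x]$ lies inside some member of $\mathcal{U}$, a Lebesgue entourage. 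Such a $\mathbf{V}$ exists when $X$ is compact, and more generally when $\mathcal{U}$ is a uniform cover. With it, topological specification for $\mathbf{V}$ yields $\mathcal{U}$-tracing.

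The main obstacle is the converse directions of (ii) and (iv) for non-compact spaces. An arbitrary open cover of a product need not be refined by a product cover, and an arbitrary open cover of a uniform space need not be uniform, so a single $N$ may fail to exist. I would first try to reduce to the case where the relevant covers are refined by product or uniform covers, for example by assuming compactness or the fine uniformity. If that reduction fails, I would state those directions only under such a hypothesis.
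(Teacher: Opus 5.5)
Your arguments for (i), (iii) and the forward direction of (ii) follow the paper's. The constant $\lceil N/p\rceil+1$ and the cover $\{U\times X_2\}$ are harmless variants. For (ii), the paper itself proves only that OCSP of the product passes to the factors, so your restricted converse is extra rather than missing.

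One claim is wrong: $p=0$ is not trivial. Let $f^0=\mathrm{id}_X$ on a Hausdorff space with points $x_1\neq x_2$, take disjoint open $U_j\ni x_j$, and use the cover $\{U_1,U_2,X\setminus\{x_1,x_2\}\}$. Tracing $x_1$ at time $0$ and $x_2$ at a later time $t\geq N$ forces $y\in U_1\cap U_2=\emptyset$. The paper silently skips $p=0$ too, so the statement should be read for $p\neq 0$.

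The genuine gap is the converse of (iv), which you leave conditional on a Lebesgue entourage. The paper's definition of the topological specification property quantifies over \emph{every} symmetric neighborhood $\mathbf{V}$ of $\Delta_X$ in $X\times X$, not only over entourages. With that definition the step is immediate. For an open cover $\mathcal{U}$, the set $\mathbf{V}=\bigcup_{A\in\mathcal{U}}A\times A$ is a symmetric open neighborhood of $\Delta_X$, and $\mathbf{V}$-tracing is exactly $\mathcal{U}$-tracing.

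Under that same definition your forward argument must change. A neighborhood $\mathbf{V}$ that is not an entourage need not admit a $\mathbf{W}$ with $\mathbf{W}\circ\mathbf{W}\subseteq\mathbf{V}$. Instead, as the paper does, use that $\mathbf{V}$ is a neighborhood of each $(x,x)$ to pick open $A_x\ni x$ with $A_x\times A_x\subseteq\mathbf{V}$, and apply OCSP to the cover $\{A_x\}$.

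Your caution is well-founded for the entourage-based notion, where the converse really fails. The shift on $\mathbb{N}^{\mathbb{N}}$ with the Baire metric has uniform specification, since $N=m$ handles agreement on $m$ coordinates. It does not have OCSP for the open cover by cylinders $[w_0w_1\cdots w_{w_0}]$. Trace the constant sequence $K$ at time $0$ and the constant sequence $1$ at time $N+1$, with $K\geq N+1$; this forces both $y_{N+1}=K$ and $y_{N+1}=1$. So you should state explicitly that you use the paper's definition; with it, (iv) holds in full without any compactness hypothesis.
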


\begin{proof} $(i)$
The backward implication for $p=1$ is immediate. Hence it is sufficient to show that if $(X,f)$ has OCSP, then $(X,f^{p})$ has OCSP for all $p \in \mathbb{Z}$.

\smallskip
\noindent\emph{\textbf{Case 1: $p \in \mathbb{N}$.}}
Let $\mathcal{U}$ be an open cover of $X$. Since $(X,f)$ has OCSP, there exists $N \in \mathbb{N}$ such that every $N$-spaced specification in $(X,f)$ is $\mathcal{U}$-traced by some point of $X$.

Let $\xi=\bigl\{ (f^{p})^{[a_i,b_i]}(x_i) \bigr\}_{i=1}^n$ be an $N$-spaced specification in $(X,f^{p})$. Then $\xi'=\bigl\{ f^{[pa_i,pb_i]}(x_i) \bigr\}_{i=1}^n$
is an $N$-spaced specification in $(X,f)$. By OCSP of $(X,f)$, there exists $y \in X$ such that for each $i \in \{1,\dots,n\}$ and every $k \in [pa_i,pb_i]$, there exists $U \in \mathcal{U}$ with $(f^{k}(y),f^{k}(x_i)) \in U \times U.$
In particular, for all $\ell \in [a_i,b_i]$,$
\bigl((f^{p})^{\ell}(y),(f^{p})^{\ell}(x_i)\bigr) \in U \times U.
$
Thus $y$ $\mathcal{U}$-traces $\xi$ in $(X,f^{p})$, and $(X,f^{p})$ has OCSP for all $p \in \mathbb{N}$.

\smallskip
\noindent\emph{\textbf{Case 2: $p=-1$.}}
Assume $(X,f)$ has OCSP. Let
$
\xi=\bigl\{ (f^{-1})^{[a_i,b_i]}(x_i) \bigr\}_{i=1}^n
$
be an $N$-spaced specification in $(X,f^{-1})$. Define
$
y_i = f^{-b_n}(x_{n+1-i}), 
c_i = b_n - b_{n+1-i}, 
d_i = b_n - a_{n+1-i},
 i=1,\dots,n.
$
Then
$
\xi'=\bigl\{ f^{[c_i,d_i]}(y_i) \bigr\}_{i=1}^n
$
is an $N$-spaced specification in $(X,f)$. By OCSP of $(X,f)$, there exists $z \in X$ such that for $i \in [1,n]$ and all $k \in [c_i,d_i]$, there exists $U \in \mathcal{U}$ with
$
(f^{k}(z),f^{k}(y_i)) \in U \times U.
$
Equivalently,
$
(f^{-k}(f^{b_n}(z)),f^{-k}(x_i)) \in U \times U, \text{for all } k \in [a_i,b_i].
$
Setting $w = f^{b_n}(z)$, we obtain
$
\bigl((f^{-1})^{k}(w),(f^{-1})^{k}(x_i)\bigr) \in U \times U
\quad \text{for all } k \in [a_i,b_i].
$
Hence, $\xi$ is $\mathcal{U}$-traced in $(X,f^{-1})$, and $(X,f^{-1})$ has OCSP.

\smallskip
Since $(X,f^{-1})$ has OCSP, applying Case~1 to $f^{-1}$ yields that $(X,f^{p})$ has OCSP for all $p \in \mathbb{Z}$.


$(ii)$ 
Assume that $(X_{1} \times X_{2}, f_{1} \times f_{2})$ has OCSP. Let $\mathcal{U}_{1}$ and $\mathcal{U}_{2}$ be arbitrary open covers of $X_{1}$ and $X_{2}$, respectively. It suffices to show that $(X_{1},f_{1})$ has OCSP; the argument for $(X_{2},f_{2})$ is analogous.

Consider the open cover $\mathcal{U}=\{U_{1}\times U_{2} : U_{1}\in\mathcal{U}_{1}, U_{2}\in\mathcal{U}_{2}\}$ of $X_{1}\times X_{2}$. By the OCSP of $(X_{1} \times X_{2}, f_{1} \times f_{2})$, there exists $N\in\mathbb{N}$ such that every $N$-spaced specification in $X_{1}\times X_{2}$ is $\mathcal{U}$-traced by some point of $X_{1}\times X_{2}$.
Let $\xi_{1}=\{f_{1}^{[a_{i},b_{i}]}(x_{i})\}_{i=1}^{n}$ be an $N$-spaced specification in $(X_{1},f_{1})$ and fix an arbitrary point $y\in X_{2}$. Define
$\xi=\{(f_{1}\times f_{2})^{[a_{i},b_{i}]}(x_{i},y)\}_{i=1}^{n}$.
Then $\xi$ is an $N$-spaced specification in $(X_{1}\times X_{2}, f_{1}\times f_{2})$. Hence there exists $(z_{1},z_{2})\in X_{1}\times X_{2}$ such that $\xi$ is $\mathcal{U}$-traced by $(z_{1},z_{2})$. By the definition of $\mathcal{U}$, it follows that $\xi_{1}$ is $\mathcal{U}_{1}$-traced by $z_{1}$. Therefore $(X_{1},f_{1})$ has OCSP.

The same argument, with the roles of $X_{1}$ and $X_{2}$ interchanged, shows that $(X_{2},f_{2})$ has OCSP.



$(iii)$
Let $(X_{1},f_{1})$ and $(X_{2},f_{2})$ be topologically conjugate dynamical systems and homeomorphism $h \colon X_{1}\to X_{2}$ with $h\circ f_{1}=f_{2}\circ h$. Assume that $(X_{1},f_{1})$ has the open cover specification property.

Let $\mathcal{U}_{2}$ be an arbitrary open cover of $X_{2}$,  define an open cover $\mathcal{U}_1$ of $X_{1}$ as $\mathcal{U}_{1}=\{h^{-1}(A): A\in\mathcal{U}_{2}\}$. By the OCSP of $(X_{1},f_{1})$, there exists $N\in\mathbb{N}$ such that for every $N$-spaced specification in $(X_{1},f_{1})$ is $\mathcal{U}_{1}$-traced.
Let $\xi_{2}=\{f_{2}^{[a_{i},b_{i}]}(x_{i})\}_{i=1}^{n}$ be an $N$-spaced specification in $(X_{2},f_{2})$. For each $i$, set $z_{i}=h^{-1}(x_{i})$ and define
$\xi_{1}=\{f_{1}^{[a_{i},b_{i}]}(z_{i})\}_{i=1}^{n}$.
Then $\xi_{1}$ is an $N$-spaced specification in $(X_{1},f_{1})$. Hence there exists $y\in X_{1}$ such that for every $i\in [1,n]$ and every $k\in[a_{i},b_{i}]$, there exists $B\in\mathcal{U}_{1}$ with
$(f_{1}^{k}(y),f_{1}^{k}(z_{i}))\in B\times B$.
Since $B=h^{-1}(A)$ for some $A\in\mathcal{U}_{2}$, this implies
$(f_{1}^{k}(y),f_{1}^{k}(h^{-1}(x_{i})))\in h^{-1}(A)\times h^{-1}(A)$.
Applying $h$ and using the conjugacy relation $h\circ f_{1}=f_{2}\circ h$, we obtain
$(f_{2}^{k}(h(y)),f_{2}^{k}(x_{i}))\in A\times A$.
Thus $\xi_{2}$ is $\mathcal{U}_{2}$-traced by $z=h(y)\in X_{2}$.

Since $\mathcal{U}_{2}$ was arbitrary, it follows that $(X_{2},f_{2})$ has the open cover specification property.




$(iv)$ 
Suppose first that $(X,f)$ has the topological specification property, and let $\mathcal{U}$ be an open cover of $X$.  
Define
$\mathbf{V}=\bigcup_{A\in\mathcal{U}} A\times A.$
Then $\mathbf{V}$ is a symmetric neighborhood of the diagonal $\Delta_X$ in $X\times X$.  
By the topological specification property, there exists $N\in\mathbb{N}$ such that for any points
$x_1,x_2,\ldots,x_n\in X$ and any integers
$a_1\le b_1<a_2\le b_2<\cdots<a_n\le b_n$
satisfying $a_{i+1}-b_i\ge N$ for $i \in [1,n-1]$, there exists $y\in X$ such that
$\bigl(f^k(y),f^k(x_i)\bigr)\in\mathbf{V} \text{ for all } k\in[a_i,b_i];\ i \in [1,n]$.
By the definition of $\mathbf{V}$, this implies that for each $i \in [1,n]$ and each $k\in[a_i,b_i]$ there exists
$A\in\mathcal{U}$ such that
$\bigl(f^k(y),f^k(x_i)\bigr)\in A\times A$.
Hence the specification is $\mathcal{U}$-traced, and $(X,f)$ has the open cover specification property.

Conversely, assume that $(X,f)$ has open cover specification property.  
Let $\mathbf{V}$ be an arbitrary symmetric neighborhood of the diagonal $\Delta_X$.  
For each $x\in X$, there exists an open neighborhood $A_x$ of $x$ such that
$A_x\times A_x\subset\mathbf{V}$.  
The collection
$\mathcal{U}=\{A_x|x\in X, A_x \times A_x \subset \mathbf{V}\}$
is an open cover of $X$.  
By open cover specification property, there exists $N\in\mathbb{N}$ such that every $N$-spaced
specification is $\mathcal{U}$-traced.
Let $\{f^{[a_i,b_i]}(x_i)\}_{i=1}^n$ be an arbitrary $N$-spaced specification in $X$.  
Then there exists $y\in X$ such that for each $i \in [1,n]$ and each $k\in[a_i,b_i]$,
$\bigl(f^k(y),f^k(x_i)\bigr)\in A\times A$ 
for some $A\in\mathcal{U}$.  
Since $A\times A\subset\mathbf{V}$, it follows that
$\bigl(f^k(y),f^k(x_i)\bigr)\in\mathbf{V}
 \text{ for all } k\in[a_i,b_i],\ i\in [1,n]$.
Thus $(X,f)$ has the topological specification property.
\end{proof}

\begin{remark}
\begin{enumerate}[label=\roman*.]
\item The product result in Theorem~2.1(iii) can be extended, by a similar argument, to countable products $(X_1\times X_2\times\cdots,,
f_1\times f_2\times\cdots)$
equipped with the product topology. In particular, if the resulting product system has the open cover specification property, then each factor $(X_i,f_i)$ possesses the open cover specification property.
\item The conjugacy invariance in Theorem~2.1(iv) shows that the open cover specification property is a topological property of the dynamical system. In particular, it is independent of the particular choice of coordinates used to represent the system.

\end{enumerate}
\end{remark}

Following is an example of Non-compact space with open cover specification

\begin{example}
    Let $A=\{0,1\}, \tau=\{\phi,\{1\}, A\}$ be topology on $A$ and 
    let $X \subset A^{\mathbb{Z}}$ denote the set of sequences that contain the symbol 1 at infinitely many coordinates, endowed with the product topology. Thus an open set $\mathcal{U}$ in $X$ will be of the form $\mathcal{U}=\{x \in X | \ x $ contains the symbol 1 at finite or infinitely many coordinates$\}$ since the only non-empty open sets in $A$ are $\{1\}$ and $A$ itself. Consider dynamical system $(X, \sigma)$ where $\sigma$ is shift map, and any open cover $\mathcal{U}$ of $X$. For $N=1$, let $\xi=\left\{\sigma^{\left[a_i, b_i\right]}\left(x_i\right)\right\}_{i=1}^n$ be an $N$-spaced specification.
    For $i \in [1,n]$, let $k$ denote the coordinate corresponding to the $a_i$-th occurrence of the symbol $1$ in $x_i$. Consider $y=\left(y_1, y_2, \ldots\right)$ where $y_k=x_{i,k}$ for $x_i=(x_{i,1},x_{i,2}, \cdots)$. Continuing this assignment up to the coordinate corresponding to the $b_i$-th occurrence of $1$ in $x_i$ we obtain a sequence $y \in X$ that traces the specification $\xi$ with respect to the open cover $\mathcal{U}$ and hence $(X, \sigma)$ has open cover specification property.
\end{example}

The specification property and the shadowing property are two fundamental notions in the study of topological dynamical systems. The shadowing property describes the phenomenon where every approximate trajectory can be closely traced by an actual trajectory. In \cite{kumar2024}, it is proved that on a compact and $T_3$ space with an onto map $f$, shadowing property and topological mixing imply OCSP. We will prove that in compact spaces, the combination of finite shadowing property and chain mixing guarantees OCSP.

\begin{theorem}
Let $(X,f)$ be a compact dynamical system. If $(X,f)$ is chain mixing and has finite shadowing property, then it has open cover specification property.
\end{theorem}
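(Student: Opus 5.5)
The plan is to feed a suitably concatenated finite pseudo-orbit into the finite shadowing property. The real work is getting a gap length $N$ from chain mixing that is \emph{uniform} in the endpoints. Throughout I use compactness, together with the (implicit) Hausdorff assumption, so that every open cover has a finite subcover and a finite open star-refinement.

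\textbf{Step 1 (choice of covers).} Let $\mathcal{U}$ be an open cover of $X$. By the finite shadowing property, choose an open cover $\mathcal{V}$ such that every finite $\mathcal{V}$-pseudo-orbit is $\mathcal{U}$-traced. Let $\mathcal{V}'$ be an open star-refinement of $\mathcal{V}$. Then whenever two members of $\mathcal{V}'$ share a point, say $a,b\in V'_1$ and $b,c\in V'_2$, their union lies in one member of $\mathcal{V}$, so $(a,c)\in V\times V$ for some $V\in\mathcal{V}$. Using continuity of $f$ and compactness, choose a finite open cover $\mathcal{W}=\{W_1,\dots,W_r\}$ such that each $W_j$ is contained in some member of $\mathcal{V}'$ and $f(W_j)$ is contained in some member of $\mathcal{V}'$. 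Fix a point $p_j\in W_j$ for each $j$.

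\textbf{Step 2 (uniform chain mixing, the main obstacle).} Apply chain mixing with the cover $\mathcal{V}'$ to each of the finitely many pairs $(p_j,p_k)$. This gives $N_{jk}$ such that for every $n\ge N_{jk}$ there is a $\mathcal{V}'$-pseudo-orbit of length $n$ from $p_j$ to $p_k$. Put $N=\max_{j,k}N_{jk}$.

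Now take arbitrary $u,v\in X$ and $n\ge N$. Pick $j,k$ with $u\in W_j$ and $v\in W_k$, and take a $\mathcal{V}'$-pseudo-orbit $p_j=y_0,y_1,\dots,y_n=p_k$. Replace the endpoints: set $x_0=u$, $x_n=v$, and $x_i=y_i$ for $0<i<n$. I then check that this is a $\mathcal{V}$-pseudo-orbit.
\begin{enumerate}[label=(\alph*)]
\item At the start: $f(u)$ and $f(p_j)$ lie in a common member of $\mathcal{V}'$ because $f(W_j)$ does. Also $f(p_j)$ and $y_1$ lie in a common member of $\mathcal{V}'$. The star-refinement then puts $(f(u),y_1)$ in some $V\times V$.
\item At the end: $f(y_{n-1})$ and $p_k$ lie in a common member of $\mathcal{V}'$, and $p_k,v\in W_k$ lie in a common member of $\mathcal{V}'$. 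So $(f(y_{n-1}),v)\in V\times V$ for some $V\in\mathcal{V}$.
\item The interior steps are $\mathcal{V}'$-steps, hence $\mathcal{V}$-steps.
\end{enumerate}
The degenerate case $n=1$ is avoided by taking $N\ge 2$. This step is where I expect the difficulty to lie. Chain mixing as defined gives $N$ depending on $(u,v)$, and the star-refinement/continuity bookkeeping above is needed to make $N$ uniform.

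\textbf{Step 3 (building and shadowing the pseudo-orbit).} Let $\xi=\{f^{[a_i,b_i]}(x_i)\}_{i=1}^n$ be an $N$-spaced specification. I build a finite sequence $(z_t)_{t=0}^{b_n}$ as follows.
\begin{enumerate}[label=(\alph*)]
\item For $0\le t\le b_1$, set $z_t=f^t(x_1)$.
\item On $[a_i,b_i]$, set $z_t=f^t(x_i)$.
\item On each gap $[b_i,a_{i+1}]$, whose length $a_{i+1}-b_i\ge N$, insert the $\mathcal{V}$-pseudo-orbit from Step 2 joining $f^{b_i}(x_i)$ to $f^{a_{i+1}}(x_{i+1})$.
\end{enumerate}
Genuine orbit pieces are trivially $\mathcal{V}$-pseudo-orbits, and the junctions agree exactly at the endpoints. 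Hence $(z_t)$ is a finite $\mathcal{V}$-pseudo-orbit. By finite shadowing, some $y\in X$ $\mathcal{U}$-traces it. In particular, for $k\in[a_i,b_i]$ we get $(f^k(y),f^k(x_i))\in U\times U$ for some $U\in\mathcal{U}$. So $\xi$ is $\mathcal{U}$-traced, and $(X,f)$ has the open cover specification property.
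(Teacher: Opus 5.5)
Your proof has the same skeleton as the paper's: take the finite-shadowing cover $\mathcal{V}$, bridge each gap of the specification with a chain from chain mixing, concatenate, and shadow. It is more careful than the paper at the one point that needs care. The paper sets $N=\max\{N',N''\}$, where $N'$ is the number of sets in a finite subcover of $\mathcal{V}$ and $N''$ is the chain-mixing constant for a \emph{given} pair $(u,v)$. That $N$ is not uniform in the gap endpoints $f^{b_{i-1}}(x_{i-1})$, $f^{a_i}(x_i)$, so the paper never produces a gap length valid for all specifications. Your Step 2 (finite net $\{p_j\}$, star refinement, swapping the endpoints, $N\ge 2$) supplies exactly this missing uniformity, and checks (a)--(c) are correct.

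The remaining issue is the hypothesis you add. The statement says only ``compact'', and the paper works in arbitrary topological spaces, adding $T_3$ or Hausdorff explicitly when it wants them. Open star refinements need more than compactness. In $X=\{a,b,c\}$ with open sets $\emptyset,\{b\},\{a,b\},\{b,c\},X$, the cover $\{\{a,b\},\{b,c\}\}$ has no open star refinement. So Step 1 does not cover the full statement.

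You can drop the separation axiom by replacing the star-refinement trick with exact concatenation through a base point.
\begin{enumerate}
\item Fix $p\in X$ and let $N_p$ be the chain-mixing constant of the pair $(p,p)$ for $\mathcal{V}$.
\item For each $u$, take a $\mathcal{V}$-chain $u=x_0,x_1,\dots,x_{a_u}=p$ with $(f(u),x_1)\in V_0\times V_0$. Any $u'\in f^{-1}(V_0)$ can replace $u$, so the start points admitting a chain of length $a_u$ to $p$ contain an open neighbourhood of $u$.
\item Symmetrically, if $p=y_0,\dots,y_{b_v}=v$ has last step in $V_1\times V_1$, any $v'\in V_1$ can replace $v$.
\item Compactness gives finitely many such neighbourhoods, so the lengths are bounded by some $A$ and $B$.
\item For $n\ge A+B+N_p$, insert a $\mathcal{V}$-loop at $p$ of length $n-a-b\ge N_p$.
\end{enumerate}
All junctions are exact points, so only $\mathcal{V}$ itself is used. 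This gives uniform chain mixing in every compact space, and your Step 3 then goes through unchanged.
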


\begin{proof}
Let $\mathcal{U}$ be an arbitrary open cover of $X$.  
Since $(X,f)$ has the finite shadowing property, there exists an open cover $\mathcal{V}$ of $X$ such that every finite $\mathcal{V}$-pseudo orbit is $\mathcal{U}$-traced.
Because $X$ is compact, $\mathcal{V}$ admits a finite subcover
$\mathcal{V}'=\{V_1,V_2,\ldots,V_{N'}\}\subset\mathcal{V}$.
Moreover, since $(X,f)$ is chain mixing implies for any $u,v \in X$ there exists $N''\in \mathbb{N}$ such that there exists $N^{\prime\prime}$ length $\mathcal{V}$-pseudo orbit with initial and terminal points as $u,v$.
Set $N=\max\{N',N''\}$. Then there exists a finite open cover
$\mathcal{W}=\{V_1,V_2,\ldots,V_N\}$
of $X$ with $\mathcal{V}'\subset\mathcal{W}\subset\mathcal{V}$ such that there exists $N$ length $\mathcal{W}$-pseudo orbit with initial and terminal points as $u,v$.

Let $\xi=\{f^{[a_i,b_i]}(x_i)\}_{i=1}^n$
be an arbitrary $N$-spaced specification.  
For each $i \in [2,n]$ with $a_i-b_{i-1}=n_i\ge N$, hence there exist points
$v_{i0},v_{i1},\ldots,v_{in_i}\in X$
such that $v_{i0}=f^{b_{i-1}}(x_{i-1}), v_{in_i}=f^{a_i}(x_i)$,
and $\bigl(f(v_{i(j-1)}),v_{ij}\bigr)\in V\times V
\quad \text{for some } V\in\mathcal{W}, \ j \in [1,n_i]$.
Consider the finite sequence obtained by concatenating the orbit segments
\[
x_1,f(x_1),\ldots,f^{b_1}(x_1),
\]
the connecting chains $\{v_{ij}\}$, and the orbit segments
\[
f^{a_i}(x_i),\ldots,f^{b_i}(x_i), \quad i=2,\ldots,n.
\]
This sequence is a finite $\mathcal{V}$-pseudo-orbit.  
By the finite shadowing property, there exists $y\in X$ that $\mathcal{U}$-traces this pseudo-orbit.

Consequently, for each $i \in [1,n]$ and each $k\in[a_i,b_i]$, there exists $U\in\mathcal{U}$ such that $\bigl(f^k(y),f^k(x_i)\bigr)\in U\times U$.
Thus every $N$-spaced specification is $\mathcal{U}$-traced, and $(X,f)$ has open cover specification property.
\end{proof}

\section{Open Cover Specification and Chaos}
The study of chaos provides a natural setting for investigating the dynamical consequences of specification properties. In particular, specification-type properties have been shown to impose strong forms of orbit complexity and to be closely related to several notions of chaos. In this section, we investigate the relationship between the open cover specification property and two important notions of chaotic behavior, namely distributional chaos and Devaney chaos.

The relationship between specification properties and distributional chaos has been studied extensively. Oprocha and \v{S}tef'ankov'a~\cite{oprocha2008} established a connection between the specification property and distributional chaos, while Yadav and Shah~\cite{yadav2022} investigated related connections between topological weak specification and distributional chaos in noncompact spaces. Motivated by these results, we first study the implications of the open cover specification property for distributional chaos. In particular, we show that, on a locally compact space, the existence of a pair of distal points together with the open cover specification property leads to distributional chaos of type DC1.

\begin{lemma}\label{lem:Lemma 4.1}
    Let $X$ be a locally compact Hausdorff
    space, 
    $x\in X$, and $\mathcal{U}$ an open cover of $X$. Then $\mathcal{U}$ has an open refinement that is locally finite at the point $x$.
\begin{proof}
    Since $X$ is locally compact Hausdorff, there exists an open neighborhood $V$ of $x$ such that $x\in V \subset \overline{V} \text{ and } \overline{V}\ \text{is compact}.$
$\mathcal{U}$ is an open cover of $X$ and $\overline{V}$ is compact set implies there exist finitely many sets $U_1, U_2, \cdots, U_n \in \mathcal{U}$
such that $\overline{V} \subset \bigcup_{i=1}^n U_i.$
Define $\mathcal{W}
=
\{U_1,\dots,U_n\}
\;\cup\;
\{U\in\mathcal U : U\cap \overline{V}=\varnothing\}.$
$\mathcal{W}$ is an open refinement of $\mathcal U$ and  is locally finite at $x$:  
The neighborhood $V$ of $x$ intersects only the finitely many sets $U_1,\dots,U_n$.  
All other sets in $\mathcal{W}$ are disjoint from $\overline{V}$, hence from $V$.
Thus, only finitely many members of \(\mathcal V\) meet a neighborhood of \(x\).
\end{proof}
\end{lemma}

\begin{theorem}
	Let $(X,f)$ be a dynamical system on a first countable, locally compact Hausdorff space $X$. If $(X,f)$ satisfies the open cover specification property for $s=2$ and admits a pair of distal points then $(X,f)$ is distributionally chaotic.
\end{theorem}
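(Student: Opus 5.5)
We will follow the classical Sklar--Smítal/Oprocha scheme for proving that specification yields DC1, with metric balls replaced by open covers. Lemma~\ref{lem:Lemma 4.1} and first countability will supply the "shrinking" structure that a metric would normally give. Fix the distal pair $(u,v)$ and an open cover $\mathcal{U}_0$ witnessing distality: for every $i\ge 0$, any members of $\mathcal{U}_0$ containing $f^i(u)$ and $f^i(v)$ respectively are disjoint. This $\mathcal{U}_0$ is the cover for which we aim to show $F_{xy}(\mathcal{U}_0)=0$. For the condition $F^*_{xy}(\mathcal{U})=1$ for \emph{every} open cover, we need a countable family of covers that is cofinal at the relevant points. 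Since we cannot hope to control all covers of a non-metrizable space, we will produce covers locally. Using Lemma~\ref{lem:Lemma 4.1} at a chosen point, we get a refinement that is locally finite there. Using a countable decreasing neighbourhood base $\{B_m\}$ at that point (first countability), we then build a sequence of open covers $\mathcal{U}_1\succ\mathcal{U}_2\succ\cdots$ such that any given open cover $\mathcal{U}$ is, near the relevant orbit points, refined by some $\mathcal{U}_m$.

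The construction runs in three steps. First, we build a Cantor-like family of points indexed by $\Sigma=\{0,1\}^{\mathbb{N}}$, or an uncountable subset of it on which any two sequences differ infinitely often. Choose rapidly increasing times $t_1<t_2<\cdots$ with $t_{k+1}/t_k\to\infty$ and block lengths $\ell_k\to\infty$ with $\ell_k/t_{k+1}\to 1$. For $\alpha\in\Sigma$, the $k$-th specification block copies the orbit of $u$ if $\alpha_k=0$ and of $v$ if $\alpha_k=1$. The blocks alternate with long blocks in which every point follows a common reference orbit, say that of $u$. Second, using OCSP with cover $\mathcal{U}_k$ and gaps $N_k$, we apply it at stage $k$ to the finite specification consisting of the first $k$ blocks. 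This gives a tracing point $y^{(k)}_\alpha$. The hypothesis "for $s=2$" will be used as permitting concatenation of two segments: the already-traced point and the new block. Third, we pass to a limit point $y_\alpha$. The points $y^{(k)}_\alpha$ for fixed $\alpha$ should stay in a compact neighbourhood $\overline{V}$ given by local compactness, with $\overline{V}$ compact as in Lemma~\ref{lem:Lemma 4.1}. Taking a cluster point, and using that tracing conditions are closed-ish conditions once we shrink covers (replace $U$ by sets whose closures lie in members of the previous cover, available by regularity of locally compact Hausdorff spaces), we obtain a point that $\mathcal{U}_k$-traces block $k$ for all $k$. Distinct $\alpha$ yield distinct points, giving an uncountable set $S=\{y_\alpha\}$.

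Verification then splits into two parts. If $\alpha_k\neq\beta_k$ for infinitely many $k$, then on the $k$-th block $y_\alpha$ and $y_\beta$ are respectively $\mathcal{U}_k$-close to $f^j(u)$ and $f^j(v)$. Since $\mathcal{U}_k$ refines $\mathcal{U}_0$ and members of $\mathcal{U}_0$ around $f^j(u)$ and $f^j(v)$ are disjoint, no single $U\in\mathcal{U}_0$ contains both points. As the block occupies a proportion tending to $1$ of $[1,t_{k+1}]$, this gives $F_{y_\alpha y_\beta}(\mathcal{U}_0)=0$. On the common reference blocks, both points trace the same orbit with respect to $\mathcal{U}_k$. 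Given an arbitrary $\mathcal{U}$, eventually $\mathcal{U}_k$ refines $\mathcal{U}$ along that reference orbit, so $F^*_{y_\alpha y_\beta}(\mathcal{U})=1$.

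The main obstacle is the last point: guaranteeing that an arbitrary open cover $\mathcal{U}$ is refined by some $\mathcal{U}_k$ along the whole reference orbit segment, since without compactness there is no Lebesgue number. We would first try taking the reference orbit to be a fixed or periodic point $p$, if one can be obtained. Failing that, we would let the reference blocks sit near a single point $p$ with countable base $\{B_m\}$, chosen so that "$\mathcal{U}$-close" near $p$ is implied by both points lying in $B_m\subset U$ for some $U\ni p$. This reduces everything to first countability at $p$ together with the local finiteness from Lemma~\ref{lem:Lemma 4.1}. Making this rigorous, and keeping the limit points inside compact sets, is where we expect the real work to lie.
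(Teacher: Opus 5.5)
Your overall scheme matches the paper's. Both use a binary tree of tracing points, blocks in which the $\alpha$-branch follows the orbit of $u$ or of $v$, and interleaved blocks in which all branches follow a common orbit. Both also use super-exponentially growing block lengths, local compactness to pass to limits, and covers built from countable neighbourhood bases. The gap is in the verification, because open covers do not behave like $\varepsilon$-balls.

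\textbf{The separation step does not follow.} From $f^j(y_\alpha)\in U'\ni f^j(u)$ and $f^j(y_\beta)\in U''\ni f^j(v)$ with $U'\cap U''=\varnothing$, it does \emph{not} follow that no member of $\mathcal{U}_0$ contains both $f^j(y_\alpha)$ and $f^j(y_\beta)$. A third member of $\mathcal{U}_0$, containing neither $f^j(u)$ nor $f^j(v)$, may contain both tracing points, and refining $\mathcal{U}_0$ by $\mathcal{U}_k$ does nothing to prevent this.

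The metric proof uses the $\varepsilon/3$ triangle inequality here; the open-cover analogue is a star-refinement condition. You need a cover $\mathcal{W}$ for computing $F$, and tracing covers such that being $\mathcal{U}_k$-close to $f^j(u)$ and to $f^j(v)$ forces the two points to be $\mathcal{W}$-separated. This must hold for all $j\ge 0$ at once. The reason is that $\mathcal{U}_k$ has to be fixed before $N_k$ is known, and hence before the times of the $k$-th block are known. Lemma~\ref{lem:Lemma 4.1} and first countability give control only at finitely many prescribed points. A first countable locally compact Hausdorff space need not be paracompact (e.g.\ $[0,\omega_1)$), so its open covers need not have open star refinements.

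\textbf{The step $F^*_{y_\alpha y_\beta}(\mathcal{U})=1$ for every $\mathcal{U}$ is, as you suspect, not established, and your fallback does not close it.} The claim that $\mathcal{U}_k$ eventually refines $\mathcal{U}$ along the reference orbit must hold in the star sense, with both tracing points in one member of $\mathcal{U}$. That is a Lebesgue-number statement for an arbitrary cover chosen after the construction, along infinitely many orbit points, and nothing in the hypotheses supplies it.

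Keeping the reference blocks near a single point $p$ with base $\{B_m\}$ forces $p$ to be fixed. If for each $m$ some $z_m$ has $z_m,f(z_m)\in B_m$, then $z_m\to p$ and $f(z_m)\to p$, so $f(p)=p$ by continuity and the Hausdorff property. Plain OCSP, unlike strong OCSP (cf.\ Lemma~\ref{lem:4.6}), provides no fixed or periodic points.

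So the argument needs an ingredient the hypotheses do not obviously give. One option is a countable family of covers that star-refines every open cover uniformly along the relevant infinite orbits, which is automatic in the metric case. Another is extra structure, such as a fixed point for the common blocks.

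The paper's sketch does not supply this either. Its covers $\{U_n,V_n,X\setminus(\overline{U_n'}\cup\overline{V_n'})\}$ only constrain points near $u$ and $v$. Its $F^*$ argument picks $U_{n_0}$ inside the members of $\mathcal{V}$ containing $u$, as though $u$ were fixed. Its separation step makes the same inference from ``disjoint elements of $\mathcal{U}$''.
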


\begin{proof}
	Let $u,v$ be a distal pair. By assumption, there exists an open cover $\mathcal U$ of $X$ such that for every $i\in\mathbb N\cup\{0\}$,
	\(
	(f^{i}(u),f^{i}(v)) \notin U\times U \quad \text{for any } U\in\mathcal U.
	\)
	
	Since $(X,f)$ has open cover specification property, choose a subsequence
	$\{n_i\}_{i\in\mathbb N}$ of positive integers such that the transition times required
	by the specification property are negligible compared to the gaps
	$n_{i+1}-n_i$ (The terms of sequence $n_i$ will be specified as per the open covers defined later). Moreover, assume that
	$
	\lim_{i\to\infty}\frac{n_i-n_{i-1}}{n_{i-1}}=\infty.
	$
	The idea is to alternate between very long intervals where orbit segments
	remain close (lying in the same element of $\mathcal U$) and very long
	intervals where they are far apart, with each successive block dominating
	the cumulative effect of the previous ones.
	
	Define
	$
	U_u=\bigcap\{U\in\mathcal U : u\in U\},
	U_v=\bigcap\{U\in\mathcal U : v\in U\}.
	$
    Since $X$ is locally compact Hausdorff space, by Lemma \ref{lem:Lemma 4.1}, $U_u$ and $U_v$ are open sets.
	$X$ is first countable, hence there exist decreasing neighbourhood bases
	$\{U_n\}$ at $u$ and $\{V_n\}$ at $v$ (By decreasing we mean $U_n\supset U_{n+1}$). Without loss of generality,  we can assume
	$U_1\subset U_u$ and $V_1\subset U_v$. For each $n$, choose open sets
	$U_n',V_n'$ such that
	$
	U_n' \subset \overline{U_n'} \subset U_n,
	V_n' \subset \overline{V_n'} \subset V_n.
	$
	
	Let $\{\mathcal U_n\}_{n\in\mathbb N}$ be the sequence of open covers defined by
	$
	\mathcal U_n=\{U_n, V_n, X\setminus(\overline{U_n'}\cup \overline{V_n'})\}.
	$
	
	\medskip
	\noindent\emph{\textbf{Step 1.}}
	Using the open cover specification property for $\mathcal U_2$, choose
	integers $n_0<n_1<n_2$ and points $y_u,y_v$ such that $y_u$
	$\mathcal U_2$-traces the orbit segments $(u,u)$ on $[n_0,n_1]$ and
	$[n_1,n_2]$, while $y_v$ $\mathcal U_2$-traces $(u,v)$ on these intervals.
	Define
	$	A_u=\bigcap_{k=n_0}^{n_2} f^{-k}(U_k^1),
	A_v=\bigcap_{k=n_0}^{n_2} f^{-k}(U_k^2),
	$
	where $U_k^1,U_k^2\in\mathcal U_2$ are the corresponding open sets in the open cover $\mathcal U_2$.
	Since $X$ is locally compact Hausdorff, there exist nonempty compact
	subsets $B_u\subset A_u$ and $B_v\subset A_v$.
	
	\medskip
	\noindent\emph{\textbf{Step 2.}}
	Let $n_6>n_5>\cdots>n_2$ and consider points
	$y_{uu},y_{uv},y_{vu},y_{vv}$ which $\mathcal U_6$-trace the orbit patterns
	\[
	(u,u,u,v,u,u),\quad (u,u,u,v,u,v),\quad
	(u,v,u,v,v,u),\quad (u,v,u,v,v,v)
	\]
	respectively on the intervals $[n_0,n_1],\dots,[n_5,n_6]$.
	Define
	\[
	\begin{aligned}
		A_{uu}&=\bigcap_{k=n_0}^{n_5} f^{-k}(U_k^1), \qquad
		A_{uv}=\bigcap_{k=n_0}^{n_5} f^{-k}(U_k^2),\\
		A_{vu}&=\bigcap_{k=n_0}^{n_5} f^{-k}(U_k^3), \qquad
		A_{vv}=\bigcap_{k=n_0}^{n_5} f^{-k}(U_k^4).
	\end{aligned}
	\]
	For each $i,j\in\{u,v\}$, choose nonempty compact subsets
	$B_{ij}\subset A_{ij}$. Proceeding inductively, for any $m\in\mathbb N$
	we obtain nonempty compact sets $B_{\alpha_1\alpha_2\cdots\alpha_m}$
	indexed by finite sequences $\alpha_i\in\{u,v\}$, such that for any $\alpha_i\in \{u,v\}$ each point in
	$B_{\alpha_1\cdots\alpha_m}$,  $\mathcal U_n$ traces(for an appropriate $n>m$) the orbit pattern $u,u_{\alpha_1},u,v,u_{\alpha_1},u_{\alpha_2},u,v,u_{\alpha_1},u_{\alpha_2},u_{\alpha_3},u,v\dots,u,v,u_{\alpha_1},u_{\alpha_2},\dots,u_{\alpha_m}$
	
	By the Cantor intersection theorem, for every infinite sequence
	$\alpha=(\alpha_1,\alpha_2,\ldots)\in\{u,v\}^{\mathbb N}$,
	$
	\mathcal B_\alpha=\bigcap_{n=1}^\infty B_{\alpha_1\cdots\alpha_n}
	\neq\varnothing.
	$
	Fix one point from each $\mathcal B_\alpha$ and define
	$
	S=\{b\in\mathcal B_\alpha : \alpha\in\{u,v\}^{\mathbb N}\}.
	$
	Since $\{u,v\}^{\mathbb N}$ is uncountable, so is $S$.
	
	Let $x,y\in S$ be distinct. Then $x\in\mathcal B_\alpha$ and
	$y\in\mathcal B_\beta$ for sequences $\alpha\neq\beta$, by construction of the sequences $\alpha,\beta$, we know that the two  differ at
	infinitely many indices. Without loss of generality, we can take $\alpha_i=u$ and $\beta_i=v$. The
	corresponding orbit segments of $x$ and $y$ lie in disjoint elements of
	$\mathcal U$, implying
	$
	F_{xy}(\mathcal U)=0.
	$
	On the other hand, for any open cover $\mathcal V$ of $X$, there exists an $n_0\in \mathbb N$ such that $U_{n_0}\subset \bigcap\{V\in \mathcal V: u\in V\}$.  Then as
	construction ensures that $x$ and $y$,  $\mathcal U_n$ traces $u$ simultaneously on
	infinitely many long intervals, yielding
	$
	F_{xy}^*(\mathcal V)=1.
	$
	
	Thus, $S$ is an uncountable distributionally scrambled set for $f$, and
	consequently $(X,f)$ is distributionally chaotic of type~$1$.
\end{proof}
The above result immediately yields the following consequence, since distributional chaos of type DC1 implies Li--Yorke chaos.
\begin{definition}
Let $\mathcal U$ be open cover of $X$. Define
$$C_1(f)=\left\{(x,y)\in X^2 \,\middle|\, \exists\,\langle n_j\rangle\subset\mathbb N,(f^{n_j}(x),f^{n_j}(y))\in U\times U \text{ for some } U\in\mathcal U \right\}$$
$$C_2(f)=\left\{(x,y)\in X^2 \,\middle|\, \exists\,\langle n_j\rangle\subset\mathbb N,(f^{n_j}(x),f^{n_j}(y))\notin U\times U \text{ for all } U\in\mathcal U \right\}.$$
The dynamical system $(X,f)$ is said to be \emph{Li--Yorke chaotic} if there exists an open cover $\mathcal U$ of $X$ and an uncountable set $S\subset X$ (called a \emph{scrambled set}) such that
$S\times S \subseteq C_1(f)\cap C_2(f).$
\end{definition}

 \begin{corollary}
Let $(X,f)$ be a dynamical system on a locally compact space $X$. If $(X,f)$ has the open cover specification property for $s=2$ and admits a pair of distal points, then $(X,f)$ is Li--Yorke chaotic.
\end{corollary}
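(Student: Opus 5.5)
The plan is to deduce the corollary from the preceding theorem: distributional chaos in the sense defined above yields Li--Yorke chaos. The scrambled set $S$ built in that proof will serve as the Li--Yorke scrambled set, and the open cover in the Li--Yorke definition will be the cover $\mathcal U$ from the distal pair $(u,v)$.

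The first step is to obtain an uncountable distributionally scrambled set $S$ together with a single open cover $\mathcal U$ such that $F_{xy}(\mathcal U)=0$ for all distinct $x,y\in S$. The theorem assumes first countability, whereas the corollary only assumes local compactness. So I will either read the corollary with the standing hypotheses of the theorem (first countable, locally compact Hausdorff), or check that the construction in the theorem's proof used first countability only to secure $F^*_{xy}=1$ for every cover; the Li--Yorke condition needs it only for the fixed cover $\mathcal U$. I expect this hypothesis mismatch to be the main obstacle. The first thing I would try is to rerun the Cantor-intersection construction with the fixed covers $\mathcal U_n=\mathcal U$, which requires only local compactness (for Lemma~\ref{lem:Lemma 4.1} and the nonempty compact sets $B_\alpha$) and Hausdorffness (for the Cantor intersection theorem).

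Next I will verify the two Li--Yorke conditions for distinct $x,y\in S$.

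\emph{$C_2$ condition.} Since $F_{xy}(\mathcal U)=\liminf_n \frac1n\zeta(x,y,n,\mathcal U)=0<1$, the set of $k$ with $(f^k(x),f^k(y))\notin U\times U$ for all $U\in\mathcal U$ is infinite. If it were finite, $\zeta(x,y,n,\mathcal U)/n$ would tend to $1$. Listing these times as $\langle n_j\rangle$ shows $(x,y)\in C_2(f)$.

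\emph{$C_1$ condition.} Since $F^*_{xy}(\mathcal U)=1>0$, the set of $k$ with $(f^k(x),f^k(y))\in U\times U$ for some $U\in\mathcal U$ is infinite, which gives $(x,y)\in C_1(f)$. Alternatively, directly from the construction, $x$ and $y$ both $\mathcal U$-trace $u$ on infinitely many intervals, so at those times both iterates lie in a common member of $\mathcal U$ containing $u$.

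Finally, the diagonal pairs $(x,x)$ need care, because $S\times S$ includes them. They lie in $C_1(f)$ trivially, but they cannot lie in $C_2(f)$. I will interpret the requirement $S\times S\subseteq C_1(f)\cap C_2(f)$, as is standard, for pairs of distinct points. With that reading, $S$ is uncountable and scrambled for $\mathcal U$, so $(X,f)$ is Li--Yorke chaotic.
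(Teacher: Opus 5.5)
Your proposal is correct and is essentially the paper's proof. You invoke the distributional-chaos theorem to get the uncountable set $S$ together with one cover $\mathcal U$ (the distal cover) for which $F_{xy}(\mathcal U)=0$ and $F^*_{xy}(\mathcal U)=1$, then read off infinitely many separation times and infinitely many proximity times. The paper does the same, tacitly using the theorem's first countable Hausdorff hypotheses (your first option) and ignoring the diagonal pairs and the pair-dependence of the cover in the DC1 definition; you rightly flag all three.

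Drop your fallback with fixed covers $\mathcal U_n=\mathcal U$, though. Two points that each $\mathcal U$-trace $f^k(u)$ need not lie in a common member of $\mathcal U$, so $C_1$ does not follow. Points tracing $f^k(u)$ and $f^k(v)$ respectively can still share a third member containing neither, so $C_2$ does not follow either. In addition, the Hausdorff property you need for the Cantor intersection step is not assumed in the corollary.
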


\begin{proof}
If $(X,f)$ has open cover specification property, then it is distributionally chaotic. Consequently, there exists an uncountable scrambled set $S\subset X$ with respect to some open cover $\mathcal U$ such that, for all $x,y\in S$,
$\liminf_{n\to\infty}\frac{1}{n}\bigl|\{k\in[1,n] : (f^k(x),f^k(y))\in U\times U \text{ for some } U\in\mathcal U\}\bigr|=0
$
and
$\limsup_{n\to\infty}\frac{1}{n}\bigl|\{k\in[1,n] : (f^k(x),f^k(y))\in U\times U \text{ for some } U\in\mathcal U\}\bigr|=1.$
It follows that for any $x,y\in S$ there exist subsequences $\{N_j\}$ and $\{M_j\}$ of $\mathbb N$ such that
$(f^{N_j}(x),f^{N_j}(y))\notin U\times U \quad \text{for all } \in\mathcal U,$
and $ f^{M_j}(x),f^{M_j}(y) \in U\times U \text{ for some } U\in\mathcal U.$
Hence $S\times S\subseteq C_2(f)$ and $S\times S\subseteq C_1(f)$, and therefore $S\times S\subseteq C_1(f)\cap C_2(f).$
Thus $(X,f)$ is Li--Yorke chaotic.
\end{proof}

We next investigate the relationship between the open cover specification property and Devaney chaos. Recall that a dynamical system $(X,f)$ is said to be Devaney chaotic if it is topologically transitive and has a dense set of periodic points. On a suitable topological space, these two conditions imply sensitivity to initial conditions. Thus, Devaney chaos provides a natural framework for studying the chaotic behavior generated by specification properties.

The following result shows that the strong open cover specification property is sufficient to guarantee Devaney chaos under a mild topological assumption.

\begin{lemma}\label{lem:Lemma 4.4}
If $(X,f)$ has the QOCSP, then $(X,f)$ is topologically mixing.
\end{lemma}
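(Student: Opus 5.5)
The plan is to verify topological mixing directly from the definition stated in the paper: for nonempty open sets $U,V$ we must find $n$ such that $U\cap f^m(V)\neq\emptyset$ for all $m\ge n$. The working form will be the equivalent and more convenient statement $f^{-m}(U)\cap V\neq\emptyset$, i.e.\ some point $y\in V$ with $f^m(y)\in U$. If we do need the image form literally, note that $f^m(y)\in U$ with $y\in V$ gives $f^m(y)\in U\cap f^m(V)$, so this single point serves for both formulations.

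The key step is to build an open cover that captures $U$ and $V$ precisely enough for the QOCSP to force orbits into them. Fix $x_1\in V$ and $x_2\in U$. Without separation axioms we cannot shrink neighbourhoods, so we exploit the freedom in choosing the cover. Consider
\[
\mathcal{W}=\{V,\ U,\ X\setminus\{x_1,x_2\}\}.
\]
Unfortunately $X\setminus\{x_1,x_2\}$ need not be open. The first alternative is $\mathcal{W}=\{U\cup V,\ X\}$, but the set $X$ makes tracing vacuous. The workable choice is to use the cover $\mathcal{W}=\{V\}\cup\{W: W \text{ open}, x_1\notin W\}$, provided $X$ is $T_1$.

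We now apply the QOCSP to $\mathcal{W}$ and the points $x_1$ and $f^{-n}$-preimages. Concretely, QOCSP gives $N$ such that for every $n\ge N$ there is $y$ with $(y,x_1)\in W\times W$ for some $W\in\mathcal{W}$. Since the only member of $\mathcal{W}$ containing $x_1$ is $V$, this forces $y\in V$.

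For the second coordinate we need $f^n(y)\in U$, but QOCSP only controls $f^n(y)$ relative to $f^n(x_2)$ for some $x_2$ of our choosing. So we must choose $x_2$ with $f^n(x_2)$ a point $p\in U$, and build the cover so that $p$ lies only in members contained in $U$. This requires a preimage, i.e.\ $p\in f^n(X)$ for all $n\ge N$. I expect to need surjectivity of $f$, or to pick $p\in U\cap\bigcap_n f^n(X)$. The cover is then
\[
\mathcal{W}=\{V,U\}\cup\{W \text{ open}: x_1,p\notin W\},
\]
and $N$ is chosen before $n$. The obstacle is that $x_2$ depends on $n$. This is harmless because $N$ depends only on $\mathcal{W}$, which is fixed by $x_1$ and $p$.

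The main obstacle is therefore twofold. First, the cover must be open and must cover $X$; this needs $T_1$, since $X\setminus\{x_1,p\}$ has to be open. In a general space one could instead use the trivial pair $\{U,V,X\}$, but then tracing gives nothing. Second, points must exist in $f^n(X)\cap U$. I would handle both by assuming (as is implicit in the paper's later Devaney result on regular spaces) that $X$ is $T_1$ and $f$ is onto. If $x_1\in U\cap V$, the containment still holds: $y\in V$ or $y\in U$ both land in $U\cup V$, so for safety I would choose $x_1\in V\setminus U$ when possible. When $V\subseteq U$ the mixing conclusion is handled directly by choosing $x_1=p\in V$ in the same way. The conclusion $f^n(y)\in U$, $y\in V$ for all $n\ge N$ then yields mixing.
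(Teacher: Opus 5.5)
Your route is the paper's: build a cover in which $x_1\in V$ lies only in members inside $V$ and the target $p\in U$ lies only in members inside $U$. Then apply the QOCSP with $z_1=x_1$ and $z_2$ an $n$-th preimage of $p$ (the paper writes $z_2=f^{-n}(y)$), noting that $N$ depends only on the cover.

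The two hypotheses you add are not a weakness. The paper's proof uses both silently: it asserts that its cover with properties 1--3 exists, and it writes $f^{-n}(y)$. The lemma is false without them:
\begin{itemize}
\item The constant map $f\equiv a$ on the discrete space $\{a,b\}$ has the QOCSP (take $y=x_1$, since $f^n(y)=f^n(x_2)$). It is not mixing, because $\{b\}\cap f^{-n}(\{b\})=\varnothing$.
\item The identity on $\{0,1,2\}$ with open sets $\varnothing,\{1\},\{2\},\{1,2\},X$ has the QOCSP vacuously, since every open cover must contain $X$. Yet $\{1\}\cap f^{-n}(\{2\})=\varnothing$.
\end{itemize}

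There is, however, a genuine gap in how you handle overlapping $U$ and $V$. Choosing $x_1\in V\setminus U$ removes the ambiguity only on the $x_1$ side; nothing in your construction keeps $p$ out of $V$. If, say, $U\subsetneq V$, every admissible $p$ lies in both $U$ and $V$. The tracing condition then only gives $f^n(y)\in U\cup V=V$, not $f^n(y)\in U$. Likewise, your $V\subseteq U$ case works only if $U$ is removed from the cover, as in your earlier cover $\{V\}\cup\{W \text{ open}: x_1\notin W\}$.

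A clean two-case split fixes this:
\begin{itemize}
\item If $U\cap V=\varnothing$, your cover $\{V,U\}\cup\{W \text{ open}: x_1,p\notin W\}$ works as written, since then automatically $x_1\notin U$ and $p\notin V$.
\item If $U\cap V\neq\varnothing$, take $x_1=p\in U\cap V$ and the cover $\{U\cap V\}\cup\{W \text{ open}: x_1\notin W\}$, which covers $X$ because $X$ is $T_1$. The only member containing $x_1$ is $U\cap V$. So the QOCSP, applied with $f^n(z_2)=x_1$, gives $y\in U\cap V\subseteq V$ and $f^n(y)\in U\cap V\subseteq U$ for every $n\ge N$.
\end{itemize}
The paper's cover has the same defect. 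If its $A\subsetneq B$, then $x\in B\in\mathcal U$ forces $B\subseteq A$ by condition 2, so no such cover exists.
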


\begin{proof}
Let $A,B$ be nonempty open subsets of $X$. Choose points $x\in A$ and $y\in B$. Let $\mathcal U$ be an open cover of $X$ such that
\begin{enumerate}[itemsep=0pt, topsep=0pt]
    \item $A,B\in\mathcal U$;
    \item if $x\in U\in\mathcal U$, then $U\subseteq A$;
    \item if $y\in V\in\mathcal U$, then $V\subseteq B$.
\end{enumerate}
Since $(X,f)$ has the QOCSP, there exists $M\in\mathbb N$ such that for any $z_1,z_2\in X$ and every $n\ge M$, there exists $z\in X$ and $U,V\in\mathcal U$ with $(z,z_1)\in U\times U \text{and} (f^n(z),f^n(z_2))\in V\times V$.
Applying this with $z_1=x$ and $z_2=f^{-n}(y)$, we obtain $z\in X$ such that $(z,x)\in U\times U$ for some $U\in\mathcal U$ and $(f^n(z),y)\in V\times V$ for some $V\in\mathcal U$. By construction of $\mathcal U$, this implies $z\in A$ and $f^n(z)\in B$. Hence $A\cap f^{-n}(B)\neq\varnothing \text{for all } n\ge M,$
and therefore $(X,f)$ is topologically mixing.
\end{proof}

\begin{lemma}\label{lem:4.5}
For a $T_3$ space $X$, if $(X,f)$ has the QOCSP, then $(X,f)$ has sensitive dependence on initial conditions.
\end{lemma}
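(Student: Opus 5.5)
The plan is to reduce sensitivity to topological mixing, which Lemma~\ref{lem:Lemma 4.4} already gives from the QOCSP. Then regularity of $X$ is used to build a single "sensitivity cover" that separates two fixed disjoint regions. Throughout I assume $X$ has at least two points; otherwise sensitivity fails trivially, and this nondegeneracy has to be added to the statement. I take $T_3$ to include $T_1$, so points are closed and regularity separates a point from a closed set.

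\emph{Step 1 (the sensitivity cover).} Fix distinct points $p,q\in X$. By $T_1$ together with regularity, choose open sets $P\ni p$ and $Q\ni q$ with $\overline{P}\cap\overline{Q}=\varnothing$. This is done by separating $p$ from the closed set $\{q\}$ and then shrinking once more using regularity. Using regularity again, choose open $P',Q'$ with $p\in P'\subset\overline{P'}\subset P$ and $q\in Q'\subset\overline{Q'}\subset Q$. Define
\[
\mathcal V=\{P,\;Q,\;X\setminus(\overline{P'}\cup\overline{Q'})\}.
\]
The first two sets cover $\overline{P'}\cup\overline{Q'}$ and the third covers the rest, so $\mathcal V$ is an open cover. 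The key observation is the following. A point of $P'$ lies in no member of $\mathcal V$ other than $P$, because it misses $Q$ (as $P'\subset P$ and $P\cap Q=\varnothing$) and misses the third set. Symmetrically, a point of $Q'$ lies only in $Q$.

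\emph{Step 2 (mixing).} Let $x\in X$ and let $G$ be an open neighbourhood of $x$. By Lemma~\ref{lem:Lemma 4.4}, $(X,f)$ is topologically mixing. Hence there is an $n$, which may be taken large enough to serve both targets at once, such that $G\cap f^{-n}(P')\neq\varnothing$ and $G\cap f^{-n}(Q')\neq\varnothing$. Pick $y_1,y_2\in G$ with $f^n(y_1)\in P'$ and $f^n(y_2)\in Q'$.

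\emph{Step 3 (conclusion).} I claim that one of $y_1,y_2$ witnesses sensitivity at $x$ for the cover $\mathcal V$. Suppose instead that $(f^n(x),f^n(y_1))\in W_1\times W_1$ and $(f^n(x),f^n(y_2))\in W_2\times W_2$ for some $W_1,W_2\in\mathcal V$. By the observation in Step~1, $W_1=P$ and $W_2=Q$. Then $f^n(x)\in P\cap Q=\varnothing$, a contradiction. So $\mathcal V$ is a sensitivity cover, and $(X,f)$ has SDIC in the sense of the definition taken from \cite{Fedeli2018}.

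\emph{Main obstacle.} The delicate point is Step~2, namely that it relies on Lemma~\ref{lem:Lemma 4.4}. The proof of that lemma uses the point $f^{-n}(y)$, which implicitly needs surjectivity of $f$ (or invertibility). If one wants to avoid this, I would instead apply the QOCSP directly with $z_1=x$ and $z_2=p$ (respectively $z_2=q$). This requires a cover fine enough at $x$ to force the tracing point into $G$, and fine enough at the image points to force $f^n$ of the tracing point near $f^n(p)$ or $f^n(q)$; regularity is used to choose that cover. One must then re-run the separation argument of Step~3 with $f^n(p)$ and $f^n(q)$ in place of $p$ and $q$. The difficulty in doing so is that the separating sets would depend on $n$. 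For this reason I would first try to justify the mixing step as stated, and fall back on the direct QOCSP argument only if needed.
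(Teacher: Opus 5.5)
Your proof is essentially the paper's: the same three-set cover built from two regularly separated points, the same step of producing two points of $G$ whose $n$-th iterates land in the two shrunken neighbourhoods at a common time $n$, and the same contradiction. The only difference is that you obtain those points from the mixing of Lemma~\ref{lem:Lemma 4.4} rather than by re-running QOCSP with $z_2=f^{-n}(y_i)$, which also spares you the paper's unjustified ``without loss of generality $G\cap(U\cup V)=\varnothing$''. Both of your caveats are genuine and silently shared by the paper, and surjectivity is a needed hypothesis, not a proof artifact: on $X=\{a,b\}$ discrete with $f\equiv a$, QOCSP holds with $N=1$ and $y=x_1$, yet $f$ is neither mixing nor sensitive. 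So drop the fallback and add surjectivity of $f$ and $|X|\ge 2$ to the hypotheses; then Lemma~\ref{lem:Lemma 4.4} is valid in a $T_1$ space and your argument is complete.
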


\begin{proof}
Let $y_1,y_2\in X$ be distinct points. Since $X$ is $T_3$, there exist disjoint open sets $U,V\subseteq X$ with $y_1\in U$ and $y_2\in V$, and open sets $Y_1,Y_2$ such that $y_1\in Y_1, \overline{Y_1}\subset U,\
y_2\in Y_2, \overline{Y_2}\subset V$.
Define the open cover $\mathcal U=\{U,V,W\}, \text{ where } W=X\setminus\overline{Y_1\cup Y_2}.$
We show that $\mathcal U$ is a sensitivity cover.

Let $x\in X$ and let $G$ be any open neighborhood of $x$. Without loss of generality, assume $G\cap(U\cup V)=\varnothing$. Define the open cover $\mathcal U_1=\{Y_1,Y_2,G,X\setminus\{x,y_1,y_2\}\}$.
Since $(X,f)$ has the QOCSP, there exists $M\in\mathbb N$ such that for all $n\ge M$ and any $z_1,z_2\in X$, there exists $w\in X$ satisfying $(w,z_1)\in U'\times U' \quad\text{and}\quad (f^n(w),f^n(z_2))\in V'\times V'$
for some $U',V'\in\mathcal U_1$.
Fix $n\ge M$. Taking $z_1=x$ and $z_2=f^{-n}(y_1)$, there exists $w_1\in X$ such that $(x,w_1)\in G\times G$ and $(f^n(w_1),y_1)\in Y_1\times Y_1$. Similarly, taking $z_2=f^{-n}(y_2)$, there exists $w_2\in X$ such that $(x,w_2)\in G\times G$ and $(f^n(w_2),y_2)\in Y_2\times Y_2$.
Consequently, at least one of the pairs $(f^n(x),f^n(w_1))$ or $(f^n(x),f^n(w_2))$ does not lie in $A\times A$ for any $A\in\mathcal U$. This establishes sensitive dependence on initial conditions.
\end{proof}

\begin{lemma}\label{lem:4.6}
Let $X$ be a $T_1$ space. If $(X,f)$ has strong OCSP, then the set of periodic points of $f$ is dense in $X$.
\end{lemma}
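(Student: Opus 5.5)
To show the periodic points are dense, I would fix an arbitrary nonempty open set $G\subseteq X$ and a point $x\in G$, and exhibit a periodic point of $f$ inside $G$. The idea is to build an open cover $\mathcal U$ of $X$ in which every member that contains $x$ is contained in $G$. Then any point that is $\mathcal U$-close to $x$ at time $0$ must lie in $G$. For the strong open cover specification property to apply, this cover has to be chosen so that a tracing point is forced into $G$.

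\textbf{Construction of the cover.} Here the $T_1$ hypothesis enters: singletons are closed, so $X\setminus\{x\}$ is open. I take
\[
\mathcal U=\{G,\ X\setminus\{x\}\}.
\]
This is an open cover of $X$, and the only member containing $x$ is $G$.

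\textbf{Applying strong OCSP.} The strong open cover specification property gives some $N\in\mathbb N$ for $\mathcal U$. I consider the one-segment specification $\xi=\{f^{[0,0]}(x)\}$, i.e.\ $n=1$ and $a_1=b_1=0$. It is trivially $N$-spaced, since there are no gaps to check. Strong OCSP then provides $y\in X$ and $l\geqslant N+b_1-a_1=N\geqslant 1$ with $f^l(y)=y$ and $(y,x)\in U\times U$ for some $U\in\mathcal U$. Since $x\in U$ and $x\notin X\setminus\{x\}$, we must have $U=G$, so $y\in G$. Thus $y$ is a periodic point in $G$, and since $G$ was arbitrary, the periodic points are dense.

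\textbf{Main obstacle.} The only delicate point is whether degenerate specifications with a single segment of length zero are admitted by the definition. If they are not, I would use two segments instead, $\{f^{[0,0]}(x), f^{[N,N]}(x)\}$, and read off the same conclusion from the first segment. I would also check that the period $l$ is positive; this follows because $l\geqslant N\geqslant 1$.
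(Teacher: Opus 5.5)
Your proof is correct and is essentially the paper's argument: the same cover $\{G,\ X\setminus\{x\}\}$ (open by the $T_1$ hypothesis), the same one-point specification $\{f^{[0,0]}(x)\}$, and the same conclusion that the periodic tracing point lies in $G$. Your version is slightly more careful than the paper's in two places: you justify explicitly that the tracing set must be $U=G$, and you take the period $l\geqslant N$ as the definition provides, rather than asserting $f^N(y)=y$.
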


\begin{proof}
Let $A\subseteq X$ be a nonempty open set and let $x\in A$. Consider open cover $\mathcal U=\{A,X\setminus\{x\}\}$. By the strong OCSP, there exists $N\in\mathbb N$ such that any $N$-spaced specification $\xi$ is $\mathcal U$-traced by some point in $X$.

Consider the specification $\xi=\{f^{[0,0]}(x)\}$. Then there exists $y\in X$ such that $(x,y)\in A\times A$ and $f^N(y)=y$. Hence $y$ is a periodic point contained in $A$. Since $A$ was arbitrary, the set of periodic points is dense in $X$.
\end{proof}

\begin{theorem}
Let $(X,f)$ be a dynamical system with the strong OCSP, where $X$ is a $T_3$ space. Then $(X,f)$ is Devaney chaotic.
\end{theorem}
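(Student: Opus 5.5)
The plan is to verify the three defining conditions of Devaney chaos separately, using the lemmas just established. The key observation is that the strong OCSP implies the QOCSP; once this is shown, Lemma~\ref{lem:Lemma 4.4}, Lemma~\ref{lem:4.5} and Lemma~\ref{lem:4.6} supply transitivity, sensitivity and density of periodic points.

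\textbf{Step 1: strong OCSP $\Rightarrow$ OCSP $\Rightarrow$ QOCSP.} The strong OCSP is by definition the OCSP with the extra requirement that the tracing point be periodic, so it implies the OCSP. Given an open cover $\mathcal U$, let $N$ be the constant from the OCSP. Take $x_1,x_2\in X$ and $n\ge N+1$. Consider the two-block specification $\xi=\{f^{[0,0]}(x_1), f^{[n,n]}(x_2)\}$. It is $N$-spaced, since $a_2-b_1=n>N$. A point $y$ that $\mathcal U$-traces $\xi$ satisfies $(y,x_1)\in U\times U$ and $(f^n(y),f^n(x_2))\in V\times V$ for some $U,V\in\mathcal U$. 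This is exactly the QOCSP with constant $N+1$. The step is routine; the only care needed is the off-by-one in the spacing convention, which is absorbed by enlarging the constant.

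\textbf{Step 2: the three conditions.} By Step~1 and Lemma~\ref{lem:Lemma 4.4}, $(X,f)$ is topologically mixing, hence topologically transitive. Here I take $n$ from the mixing conclusion $A\cap f^{-n}(B)\neq\varnothing$, which gives $f^n(A)\cap B\neq\varnothing$ and so matches the definition of transitivity up to the order of the two open sets. Since $X$ is $T_3$, Step~1 and Lemma~\ref{lem:4.5} give sensitive dependence on initial conditions. A $T_3$ space is in particular $T_1$ under the convention that $T_3$ includes $T_1$, so Lemma~\ref{lem:4.6} gives density of the periodic points. These three facts together are Devaney chaos.

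\textbf{Main obstacle.} The delicate point is that Lemmas~\ref{lem:Lemma 4.4} and~\ref{lem:4.5} use the QOCSP with $z_2=f^{-n}(y)$, which tacitly requires a preimage. To avoid this, I would apply the OCSP directly with a specification whose second block is taken at time $0$ of a shifted orbit, but that also needs surjectivity. So I would first show that $f$ is surjective. For this I use density of periodic points together with the $T_3$ hypothesis: periodic points lie in $f(X)$, so $f(X)$ is dense. I would then aim to argue that $y$ itself lies in $f(X)$, or else that only membership of $y$ in an open set is needed, which is what the lemmas actually use. Concretely, I would choose $y'$ periodic in the relevant open set $B$ (respectively $Y_1$, $Y_2$), which has preimages of every order, and run the lemmas' arguments with $y'$ in place of $y$. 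This choice of periodic representatives is the step I expect to require the most care.
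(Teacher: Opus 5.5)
Your proposal is correct and follows the paper's proof: the strong OCSP gives the QOCSP, and then Lemma~\ref{lem:Lemma 4.4}, Lemma~\ref{lem:4.5} and Lemma~\ref{lem:4.6} supply mixing (hence transitivity), sensitivity, and density of periodic points. Your explicit two-block specification $\{f^{[0,0]}(x_1),f^{[n,n]}(x_2)\}$ establishing the QOCSP fills in a step the paper only asserts. So does your use of periodic points, available by Lemma~\ref{lem:4.6} without relying on the other two lemmas, to ensure that the preimages $f^{-n}(y)$ invoked in Lemmas~\ref{lem:Lemma 4.4} and~\ref{lem:4.5} exist.
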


\begin{proof}
Since strong OCSP implies QOCSP, the system $(X,f)$ is topologically mixing by Lemma \ref{lem:Lemma 4.4} and has sensitive dependence on initial conditions by Lemma \ref{lem:4.5}. By Lemma \ref{lem:4.6}, the set of periodic points is dense in $X$. Hence $(X,f)$ satisfies all three conditions of Devaney chaos.
\end{proof}

  The $T_3$ assumption in the above theorem is sufficient but not necessary. The following example demonstrates that a dynamical system may possess the open cover specification property and be Devaney chaotic even when the underlying space is not $T_3$.

\begin{example}
Let $X=[0,1]$ and equip $X$ with the topology \\ $\tau= \left\{ \phi, \left\{ \frac{1}{2} \right\} \cup U
\middle| U\text{ is open in the usual topology on }[0,1] \right\} .$
Consider the dynamical system $(X,f)$, where $f$ is the tent map. Then $(X,f)$ has the open cover specification property and is Devaney chaotic, as in the case of the usual topology. However, $(X,\tau)$ is not a $T_3$ space. Hence, the $T_3$ assumption in the preceding theorem is not necessary.
\end{example}

\noindent
\textbf{Acknowledgments}:  
The first author gratefully acknowledges financial support from the University Grants Commission, India, under the grant code BMHMU00886752 U.

\end{document}